\title{Interpolation for Restricted Tangent Bundles of General Curves}
\begin{document}
\maketitle

\begin{abstract}
Let $(C, p_1, p_2, \ldots, p_n)$ be a general marked curve of genus $g$,
and $q_1, q_2, \ldots, q_n \in \pp^r$ be a general collection of points.
We determine when there exists a nondegenerate
degree $d$ map $f \colon C \to \pp^r$
so that $f(p_i) = q_i$ for all $i$.
This is a consequence of our main theorem, which states that the
restricted tangent bundle $f^* T_{\pp^r}$ of a general curve of genus $g$,
equipped with a general degree~$d$ map $f$ to $\pp^r$,
satisfies the property of \emph{interpolation}
(i.e.\ that for a general effective divisor $D$ of any degree on $C$,
either $H^0(f^* T_{\pp^r}(-D)) = 0$ or $H^1(f^* T_{\pp^r}(-D)) = 0$).
We also prove an analogous theorem for the twist $f^* T_{\pp^r}(-1)$.
\end{abstract}

\section{Introduction}

The goal of this paper is to answer the following basic question
about incidence conditions for curves:

\begin{quest*}
Fix a general marked curve $(C, p_1, p_2, \ldots, p_n)$ of genus $g$,
and $n$ general points $q_1, q_2, \ldots, q_n \in \pp^r$.
When does there exist a nondegenerate
degree $d$ map $f \colon C \to \pp^r$
so that $f(p_i) = q_i$ for all $i$?
\end{quest*}

(An analogous question when $(C, p_1, p_2, \ldots, p_n)$
is allowed to vary in moduli was recently answered in \cite{aly}
for curves with nonspecial hyperplane section.)

In order for there to exist any nondegenerate degree $d$
maps $f \colon C \to \pp^r$, the Brill-Noether theorem \cite{bn} 
states that the \emph{Brill-Noether number}
\[\rho(d, g, r) := (r + 1)d - rg - r(r + 1)\]
must be nonnegative;
so we assume this is the case for the remainder of the paper.
In this case,
writing $\map_d(C, \pp^r)$ for the space of nondegenerate degree $d$ maps
$C \to \pp^r$,
the Brill-Noether theorem additionally gives
\[\dim \map_d(C, \pp^r) = \rho(d, g, r) + \dim \aut \pp^r = (r + 1)d - rg + r.\]
Thus, the answer to our main question can only be positive when
\begin{equation} \label{fund-ineq}
(r + 1)d - rg + r - rn \geq 0.
\end{equation}
Our main theorem will imply (as an immediate consequence)
that conversely, the answer to our main question is positive
when the above inequality holds. To state the main theorem,
we will first need to make a definition:

\begin{defi} \label{def:inter} We say that a vector bundle $\mathcal{E}$ on a curve $C$
\emph{satisfies interpolation} if it is nonspecial (i.e.\ $H^1(\mathcal{E}) = 0$), and
for a general effective divisor $D$
of any degree $d \geq 0$, either
\[H^0(\mathcal{E}(-D)) = 0 \quad \text{or} \quad H^1(\mathcal{E}(-D)) = 0.\]
For $C$ reducible, we require that
the above holds for an effective divisor $D$ which is general
in \emph{some} (not every) component of $\sym^d C$ (for each $d \geq 0$).
\end{defi}

\noindent
With this notation, our main result is:

\begin{thm} \label{main} Let $C$ be a general curve of genus $g$,
equipped with a general nondegenerate map
$f \colon C \to \pp^r$ of degree $d$. Then $f^* T_{\pp^r}$
satisfies interpolation.
\end{thm}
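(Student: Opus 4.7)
The plan is to induct on $(g, d)$ by degenerating the pair $(C, f)$ to a reducible curve, exploiting the fact that the paper's definition of interpolation is tailored to allow reducible limits. A first step is to reduce interpolation for $\mathcal{E} := f^* T_{\pp^r}$ to a small number of critical cohomology vanishings. The Euler sequence
\[0 \to \mathcal{O}_C \to f^* \mathcal{O}_{\pp^r}(1)^{\oplus(r+1)} \to f^* T_{\pp^r} \to 0\]
gives $\chi(\mathcal{E}) = (r+1)d - rg + r$. Since $\mathcal{E}$ has rank $r$, adding a general point of $C$ to the divisor reduces $h^0$ by exactly $r$ whenever $\mathcal{E}(-D)$ is sufficiently generically generated, so it is enough to verify $H^1(\mathcal{E}(-D)) = 0$ for $D$ a general effective divisor of degree $\lfloor \chi(\mathcal{E})/r \rfloor$, together with the dual $H^0$-vanishing at one higher degree when $r \nmid \chi(\mathcal{E})$. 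All other degrees follow by semicontinuity in $D$.

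For the inductive step I would specialize $C$ to a nodal curve $C_0 = X \cup_\Delta Y$, where $Y$ is a short tail — a rational curve meeting $X$ in one or two nodes, or an elliptic bridge — and $f$ specializes to a map $f_0$ whose restrictions to $X$ and $Y$ are general maps of strictly smaller invariants, so that interpolation holds on each component by the inductive hypothesis. The normalization sequence
\[0 \to f_0^* T_{\pp^r}|_X(-\Delta) \to f_0^* T_{\pp^r} \to f_0^* T_{\pp^r}|_Y \to 0\]
then reduces the critical vanishing on $C_0$ to twisted vanishings on each component; distributing $D$ between $X$ and $Y$ so that each side lands in the inductively accessible range, and then lifting from $(C_0, f_0)$ to the general fiber by upper semicontinuity of cohomology, gives interpolation on general $(C, f)$. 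The base cases are $g = 0$, where by Grothendieck/Ramella the bundle $f^* T_{\pp^r}$ on $\pp^1$ is balanced for general $f$ and interpolation is immediate, together with small-degree anchors such as $d = r$ (the rational normal curve) that provide a starting point for each $r$.

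The hard part will be the inductive reduction itself, where three difficulties converge. First, one must check that $(C, f)$ genuinely deforms from the chosen reducible limit $(C_0, f_0)$ with a \emph{general} map on each component — a Brill-Noether-type smoothing statement on reducible curves that is delicate precisely when $\rho(d, g, r)$ is small. Second, the combinatorics of splitting $D$ between $X$ and $Y$ and of prescribing the splitting type of $\mathcal{E}$ at the nodes will likely force a case analysis on the residues of $d$ and $g$ modulo $r$, since $\chi(\mathcal{E})/r$ need not be an integer and the twists at the nodes shift its value on each side. Third, the parallel statement for $f^* T_{\pp^r}(-1)$ promised in the abstract should be carried through the same induction, so each degeneration has to support both statements simultaneously, which further restricts the admissible choice of tail $Y$ and forces the two inductions to be interleaved rather than run separately.
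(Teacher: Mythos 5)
Your overall architecture --- induction by degeneration to a reducible nodal curve, a gluing/normalization sequence to split the critical vanishing between the two components, and a base case on $\pp^1$ --- is the same as the paper's, and two of your ingredients are sound: the reduction of interpolation to the vanishings at the one or two critical degrees is standard (it is essentially Lemma~\ref{vb-sections} plus semicontinuity), and invoking Ramella's balancedness theorem for $g = 0$ is a legitimate shortcut for the base case (the paper instead reproves this by peeling off $1$-secant lines, because it simultaneously needs the twist $f^* T_{\pp^r}(-1)$; but a balanced bundle stays balanced after twisting, so your route covers that too).

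The genuine gap is in your choice of tail $Y$. If $Y$ is rational of degree $\delta \geq 1$ meeting $X$ in two points, then $X$ has genus $g - 1$ and degree $d - \delta$, and
\[
\rho(d - \delta,\, g - 1,\, r) \;=\; \rho(d, g, r) - (r+1)\delta + r \;<\; \rho(d, g, r),
\]
so when $\rho(d,g,r)$ is small (in particular when it is $0$) a general curve of genus $g-1$ carries \emph{no} nondegenerate map of degree $d - \delta$, and the inductive hypothesis cannot be applied to $X$. A one-secant tail is worse (it does not even lower the genus), and an elliptic bridge of nondegenerate degree $\geq r+1$ fails the same count. So the ``Brill--Noether-type smoothing statement'' you flag as delicate is not merely delicate for your degenerations --- it is false, and this is precisely where the argument breaks. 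The paper's resolution is to attach a \emph{degree-$r$ rational normal curve} at $s = \min(g+1, r+2)$ points: this is the unique choice for which $\rho(d - r, g - s + 1, r) = \rho(d,g,r)$ when $s = r + 2$, so the numerology is preserved exactly; and it has the additional feature that $\chi(T_{\pp^r}|_{\pp^1}) = r(r+2)$ is divisible by the rank $r$ and is at least $rs$, which is exactly the hypothesis of the paper's gluing lemma (Lemma~\ref{lm:higher}) and lets one put the entire twisting divisor on the rational normal curve, avoiding the mod-$r$ case analysis you anticipate. Without replacing your $1$- or $2$-secant tails by such a many-secant rational normal curve (or some equivalent device that preserves $\rho$), the induction does not close.
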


To see that this theorem implies a positive answer to the main
question subject to the inequality \eqref{fund-ineq},
we first note that by basic deformation theory,
the map $\map_d(C, \pp^r) \to (\pp^r)^n$,
defined via $f \mapsto (f(p_i))_{i = 1}^n$, 
is smooth at $f$ provided that $H^1(f^* T_{\pp^r}(-p_1-\cdots-p_n)) = 0$.
Note that the left-hand side of \eqref{fund-ineq}
equals $\chi(f^* T_{\pp^r})$. Consequently,
it suffices to show, for $C$ a general curve of genus $g$,
equipped with a general nondegenerate map $f \colon C \to \pp^r$,
that $H^1(f^* T_{\pp^r}(-p_1-\cdots-p_n))$ vanishes
whenever
$\chi(f^* T_{\pp^r}(-p_1-\cdots-p_n)) \geq 0$.
But this is immediate provided that $f^* T_{\pp^r}$ satisfies interpolation.
We have thus shown that our main theorem implies:

\begin{cor} \label{main-cor}
Fix a general marked curve $(C, p_1, p_2, \ldots, p_n)$ of genus $g$,
and $n$ general points $q_1, q_2, \ldots, q_n \in \pp^r$.
There exists a nondegenerate
degree $d$ map $f \colon C \to \pp^r$
so that $f(p_i) = q_i$ for all $i$ if and only if
$\rho(d, g, r) \geq 0$ and
\[(r + 1) d - rg + r - rn \geq 0.\]
\end{cor}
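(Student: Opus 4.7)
The plan is to combine Theorem \ref{main} with standard deformation theory for the evaluation morphism $\mathrm{ev} \colon \map_d(C,\pp^r) \to (\pp^r)^n$, $f \mapsto (f(p_i))_{i=1}^n$, splitting the proof into a necessity and a sufficiency direction. The sketch in the paragraph preceding the corollary already hints at the essentials; my job is to formalize the two halves and keep track of which notion of ``general'' is in play at each stage.

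For necessity, I would first note that $\rho(d,g,r) \geq 0$ is forced by the Brill--Noether theorem, since otherwise no nondegenerate degree $d$ map $C \to \pp^r$ exists at all and there is nothing to analyze. For the second inequality, if a general tuple $(q_i) \in (\pp^r)^n$ lies in the image of $\mathrm{ev}$, then $\mathrm{ev}$ must be dominant on some component, and comparing the recalled formula $\dim \map_d(C,\pp^r) = (r+1)d - rg + r$ with $\dim (\pp^r)^n = rn$ yields exactly $(r+1)d - rg + r - rn \geq 0$.

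For sufficiency I would exhibit a single $f$ at which $d\mathrm{ev}_f$ is surjective; smoothness at that point then forces dominance of $\mathrm{ev}$, so the general $(q_i)$ is hit. Twisting the restriction sequence for $p_1,\ldots,p_n$ gives
\[0 \to f^*T_{\pp^r}(-p_1-\cdots-p_n) \to f^*T_{\pp^r} \to \bigoplus_{i=1}^n (f^*T_{\pp^r})|_{p_i} \to 0,\]
and the induced long exact sequence in cohomology identifies $d\mathrm{ev}_f$ with the evaluation-on-sections map, which is surjective precisely when $H^1(f^*T_{\pp^r}(-p_1-\cdots-p_n)) = 0$. Since $\chi(f^*T_{\pp^r}(-p_1-\cdots-p_n)) = (r+1)d - rg + r - rn$ is $\geq 0$ by hypothesis, and $p_1 + \cdots + p_n$ is a general effective divisor on $C$, interpolation for $f^*T_{\pp^r}$ (Theorem \ref{main}) delivers the required $H^1$ vanishing.

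Granted Theorem \ref{main}, there is no real obstacle here: the deduction is entirely formal. The only care required is to verify that the generalities match up --- a general choice of marked points $(p_i)$ makes $p_1 + \cdots + p_n$ general in $\sym^n C$, which is exactly the input that the interpolation definition wants, and a general $(q_i)$ is one in the open dense image of $\mathrm{ev}$. The genuine difficulty of the paper, namely proving that the restricted tangent bundle satisfies interpolation, is packaged entirely into Theorem \ref{main}.
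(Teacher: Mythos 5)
Your proposal is correct and follows essentially the same route as the paper: necessity via Brill--Noether and the dimension count for $\map_d(C,\pp^r)$, and sufficiency by identifying surjectivity of $d\mathrm{ev}_f$ with the vanishing of $H^1(f^*T_{\pp^r}(-p_1-\cdots-p_n))$, which interpolation (Theorem~\ref{main}) supplies since $\chi = (r+1)d - rg + r - rn \geq 0$. The paper packages this exact argument into the paragraph preceding the corollary, so there is nothing to add.
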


Additionally, we prove a similar theorem for the twist
of the tangent bundle; this answers a similar question,
where $d$ of the $n$ points are constrained to lie on a hyperplane:

\begin{thm} \label{main-1} Let $C$ be a general curve of genus $g$,
equipped with a general nondegenerate map
$f \colon C \to \pp^r$ of degree $d$. Then $f^* T_{\pp^r}(-1)$
satisfies interpolation if and only if
\[d - rg - 1 \geq 0.\]
\end{thm}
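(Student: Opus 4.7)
The necessity of $d \geq rg + 1$ follows quickly from the Euler sequence. Twisting $0 \to \mathcal{O} \to \mathcal{O}_{\pp^r}(1)^{r+1} \to T_{\pp^r} \to 0$ by $\mathcal{O}(-1)$ and pulling back to $C$ produces
\[
0 \to f^*\mathcal{O}_{\pp^r}(-1) \to \mathcal{O}_C^{r+1} \to f^*T_{\pp^r}(-1) \to 0,
\]
whose left term has no global sections (negative degree), so $h^0(f^*T_{\pp^r}(-1)) \geq r+1$. If $f^*T_{\pp^r}(-1)$ satisfies interpolation, then in particular $H^1(f^*T_{\pp^r}(-1)) = 0$, hence Riemann--Roch gives $\chi(f^*T_{\pp^r}(-1)) = d - rg + r = h^0(f^*T_{\pp^r}(-1)) \geq r + 1$, equivalently $d - rg - 1 \geq 0$.

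For sufficiency, the plan is to bootstrap from Theorem~\ref{main} via a hyperplane-section sequence. Let $H \subset \pp^r$ be a general hyperplane and $D_H := f^{-1}(H)$, a reduced divisor of degree $d$ on $C$. Tensoring $0 \to \mathcal{O}(-1) \to \mathcal{O} \to \mathcal{O}_H \to 0$ with $T_{\pp^r}$ and pulling back yields
\[
0 \to f^*T_{\pp^r}(-1) \to f^*T_{\pp^r} \to f^*T_{\pp^r}\big|_{D_H} \to 0.
\]
For a general effective divisor $D$ of degree $e$ on $C$, disjoint from $D_H$, twist by $\mathcal{O}_C(-D)$ and take the long exact sequence. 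Theorem~\ref{main} forces $H^1(f^*T_{\pp^r}(-D)) = 0$ whenever $\chi(f^*T_{\pp^r}(-D)) \geq 0$; in the complementary range $H^0(f^*T_{\pp^r}(-D))$ vanishes, so $H^0(f^*T_{\pp^r}(-1)(-D))$ vanishes as well and interpolation at $D$ is automatic. The remaining task is to show that in the former range the evaluation map
\[
\mathrm{ev}_D \colon H^0(f^*T_{\pp^r}(-D)) \longrightarrow H^0(f^*T_{\pp^r}\big|_{D_H}) \cong \mathbb{C}^{rd}
\]
has maximal rank (surjectivity kills $H^1(f^*T_{\pp^r}(-1)(-D))$, injectivity kills $H^0(f^*T_{\pp^r}(-1)(-D))$). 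The hypothesis $d \geq rg + 1$ enters exactly to make $\dim(\mathrm{source}) - \dim(\mathrm{target}) \geq r + 1$ at $D = 0$, matching the Euler lower bound $h^0(f^*T_{\pp^r}(-1)) \geq r+1$ so that surjectivity of $\mathrm{ev}_0$ is numerically possible.

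The main obstacle is thus the maximal-rank statement for $\mathrm{ev}_D$ in the appropriate range of $e$. I would attack this by specialization of $(C, f)$: degenerate $C$ to a nodal curve $C_0 \cup_p R$, where $R$ is a rational tail mapped by a low-degree map into $\pp^r$ (for a line one has $f_R^*T_{\pp^r}|_R \cong \mathcal{O}_R(2) \oplus \mathcal{O}_R(1)^{r-1}$ and the twist $\mathcal{O}_R(1) \oplus \mathcal{O}_R^{r-1}$, both of which split explicitly), and induct on the pair $(d, g)$ down to a base case at small degree or $g = 0$. In the specialization one must send $D$ into a suitable component of $\sym^e C$ (as permitted by Definition~\ref{def:inter} for reducible $C$) and keep $D$ and $D_H$ disjoint in the limit; by semicontinuity of rank it then suffices to exhibit one such degeneration realizing the expected rank to propagate maximal rank to a general $(C, f, D)$.
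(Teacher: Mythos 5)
Your ``only if'' direction is correct, and is essentially the paper's argument in Euler-sequence clothing: the injection $H^0(\oo_C^{r+1}) \hookrightarrow H^0(f^*T_{\pp^r}(-1))$ you extract is exactly the restriction of $H^0(T_{\pp^r}(-1))$ used in the paper, and the numerical conclusion is identical.

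The ``if'' direction, however, has a genuine gap: the reduction to maximal rank of $\ev_D \colon H^0(f^*T_{\pp^r}(-D)) \to H^0(f^*T_{\pp^r}|_{D_H})$ is not a simplification of the problem but a restatement of it. By the long exact sequence you invoke, maximal rank of $\ev_D$ for general $D$ of every degree is \emph{equivalent} to interpolation of $f^*T_{\pp^r}(-1)$ (given Theorem~\ref{main}), so the entire content of the theorem is deferred to the final paragraph, which is only a strategy sketch. That sketch leaves the hard points unaddressed: (i) in a degeneration $C \rightsquigarrow C_0 \cup_p R$ the divisor $D_H = f^{-1}(H)$ is not freely chosen --- it is forced by the limit map, and you must control how its $d$ points distribute between the components and verify that the limiting evaluation map is still the ``expected'' one; (ii) you give no base case and no argument for why the expected rank holds there; (iii) you do not say how the hypothesis $d - rg - 1 \geq 0$ propagates through the induction --- it is needed not just ``at $D = 0$'' for the original curve but at every stage, and with a one-point rational tail the degree drops while the genus does not, so the inequality can fail for the component carrying the genus. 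The paper avoids all of this by proving interpolation for the twist \emph{directly} with the same machinery as Theorem~\ref{main}: the rational case (Proposition~\ref{prop:rat}) in fact establishes interpolation for $f^*T_{\pp^r}(-1)$ first, via Lemmas~\ref{trivglue} and~\ref{genmod}, and the higher-genus case degenerates to $D \cup_\Gamma \pp^1$ with $\#\Gamma = 2$ and $\pp^1$ a rational normal curve, applying Lemma~\ref{lm:higher} with $\chi(g_{\pp^1}^*T_{\pp^r}(-1)) = 2r$; the hypothesis enters precisely to guarantee $d' - rg' - 1 \geq 0$ for the residual component, keeping the induction alive. To salvage your approach you would need to supply a full inductive proof of the maximal-rank statement, which amounts to redoing that degeneration argument anyway.
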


To see the ``only if'' direction, first note that
since $C$ is nondegenerate,
the restriction map $H^0(T_{\pp^r}(-1)) \to H^0(f^* T_{\pp^r}(-1))$
is injective, so $\dim H^0(f^* T_{\pp^r}(-1)) \geq \dim H^0(T_{\pp^r}(-1)) = r + 1$.
If $d - rg - 1 < 0$, then
$\chi(f^* T_{\pp^r}(-1)) = d - rg + r < r + 1$; consequently
$H^1(f^* T_{\pp^r}(-1)) \neq 0$,
so $f^* T_{\pp^r}(-1)$ does not satisfy interpolation.
For the remainder of the paper, we will therefore assume
$d - rg - 1 \geq 0$ in Theorem~\ref{main-1}.
As before, Theorem~\ref{main-1} gives the ``if'' direction of:

\begin{cor} \label{main-1-cor}
Fix a general marked curve $(C, p_1, p_2, \ldots, p_n)$ of genus $g$,
and $n$ points $q_1, q_2, \ldots, \linebreak[0] q_n \in \pp^r$ which are general subject
to the constraint that $q_1, q_2, \ldots, q_d$ lie on a hyperplane $H$
(for $d \leq n$).
There exists a nondegenerate
degree $d$ map $f \colon C \to \pp^r$
so that $f(p_i) = q_i$ for all $i$ if and only if $\rho(d, g, r) \geq 0$ and
\[(r + 1) d - rg + r - rn \geq 0 \quad \text{and} \quad d - rg - 1 \geq 0.\]
\end{cor}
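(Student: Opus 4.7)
The plan is to derive Corollary~\ref{main-1-cor} from Theorem~\ref{main-1} in parallel with the derivation of Corollary~\ref{main-cor} from Theorem~\ref{main} given above, using one extra incidence-geometry observation: any $f$ with $f(p_i) = q_i$ for all $i$, and $q_1, \ldots, q_d$ on a common hyperplane $H$, must satisfy $f^{-1}(H) = p_1 + \cdots + p_d$ as effective divisors (both sides have degree $d$, and one contains the other). Hence $f^*\mathcal{O}(1) \cong \mathcal{O}_C(p_1 + \cdots + p_d)$ and $f^* T_{\pp^r}(-1) \cong f^* T_{\pp^r}(-p_1 - \cdots - p_d)$; this is the identity that lets interpolation for the twist feed into the deformation-theoretic smoothness argument.

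\textbf{If direction.} Under the three inequalities---which together force $d \geq g + r$, ensuring that $\mathcal{O}_C(p_1 + \cdots + p_d)$ has an $(r+1)$-dimensional space of sections for general $(C, p_i)$---I would \emph{reverse} the natural order of construction. Start from a general pair $(C, f)$ consisting of a curve of genus $g$ and a nondegenerate degree $d$ map $C \to \pp^r$, and from a general hyperplane $H$; take $p_1, \ldots, p_d$ to be the points of $f^{-1}(H)$ and adjoin $n - d$ further general points $p_{d+1}, \ldots, p_n \in C$. A routine dimension count shows that the marked curves obtained this way sweep out a dense open of the moduli of genus $g$ curves with $n$ marked points. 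Since $(C, f)$ is general, Theorem~\ref{main-1} gives interpolation for $f^* T_{\pp^r}(-1)$, and applied to $D = p_{d+1} + \cdots + p_n$---for which $\chi(f^* T_{\pp^r}(-1)(-D)) = (r+1)d - rg + r - rn \geq 0$ by hypothesis---it yields $H^1(f^* T_{\pp^r}(-1)(-D)) = 0$, which via the identification above is $H^1(f^* T_{\pp^r}(-p_1 - \cdots - p_n)) = 0$. Exactly as in the derivation of Corollary~\ref{main-cor}, this vanishing makes the evaluation map $\map_d(C, \pp^r) \to (\pp^r)^n$ smooth at $f$, so its image contains a Zariski-open neighborhood of $(f(p_i))$; as the latter already lies in the incidence locus, the image is dense there.

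\textbf{Only-if direction.} The necessity of $\rho(d, g, r) \geq 0$ is Brill--Noether, and $d - rg - 1 \geq 0$ is the obstruction already noted in the text following Theorem~\ref{main-1}: if it fails, the Euler sequence forces $h^1(f^* T_{\pp^r}(-1)) > 0$, and twisting by an effective divisor can only increase $h^1$, so the required deformation-theoretic smoothness fails. For $(r+1)d - rg + r - rn \geq 0$, I would fix $H$ and parametrize the locus $M_H \subset \map_d(C, \pp^r)$ of $f$ with $f(p_i) \in H$ for $i \leq d$: such an $f$ is of the form $[s_0 : \cdots : s_r]$ with $s_0$ forced to be the canonical section of $\mathcal{O}_C(p_1 + \cdots + p_d)$, giving $\dim M_H = r(d - g + 1)$. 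Comparison with $\dim(H^d \times (\pp^r)^{n-d}) = rn - d$ gives precisely the stated inequality.

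\textbf{Main obstacle.} The subtle point is that, for a given marked curve $(C, p_1, \ldots, p_n)$, the maps $f$ that can realize $f(p_i) = q_i$ are constrained---one of their coordinate sections must be (up to scalar) the canonical section of $\mathcal{O}_C(p_1 + \cdots + p_d)$---so they are not ``general'' nondegenerate degree $d$ maps, and Theorem~\ref{main-1} does not apply to them verbatim. The reverse-order construction in the if direction is designed precisely to bypass this: by beginning with a general $(C, f)$ and extracting the marked points from $f^{-1}(H)$, Theorem~\ref{main-1} is invoked only for a truly general map, while the resulting marked curve happens to be general in its own moduli space.
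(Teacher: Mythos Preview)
Your ``if'' direction is correct and is essentially a careful fleshing-out of what the paper leaves implicit (the paper simply says ``As before, Theorem~\ref{main-1} gives the `if' direction''). Your reverse-order construction---starting from a general $(C,f)$ and a general $H$, and reading off $p_1,\ldots,p_d$ as $f^{-1}(H)$---cleanly resolves the generality issue you flag as the main obstacle, and the identification $f^*T_{\pp^r}(-1)\cong f^*T_{\pp^r}(-p_1-\cdots-p_d)$ is exactly the point.

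However, your ``only if'' argument for the inequality $d - rg - 1 \geq 0$ has a genuine gap. You argue that if $d - rg - 1 < 0$ then $h^1(f^*T_{\pp^r}(-1)) > 0$, hence $h^1(f^*T_{\pp^r}(-p_1-\cdots-p_n)) > 0$, ``so the required deformation-theoretic smoothness fails.'' But smoothness of the evaluation map is merely a \emph{sufficient} condition for dominance; its failure does not show that no $f$ exists for general $(q_i)$. (The passage after Theorem~\ref{main-1} that you cite proves the ``only if'' of Theorem~\ref{main-1} itself---that interpolation fails---not non-existence in the corollary.) The paper's argument is instead a second dimension count, building on the one you already did: the restricted evaluation $f\mapsto (f(p_i))_{i=1}^d$ from $M_H$ into $H^d$ has fibers carrying a free action of the pointwise stabilizer of $H$ in $\aut\pp^r$, which is $(r+1)$-dimensional. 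Dominance onto $H^d$ therefore forces
\[
r(d+1-g)\ \geq\ (r-1)d + (r+1),
\]
which rearranges to $d - rg - 1 \geq 0$. Your dimension count for the other inequality $(r+1)d - rg + r - rn \geq 0$ is fine and matches the paper's.
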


For the ``only if'' direction, we first compute the dimension of
the space of degree~$d$ maps
$\map_d((C, p_1 \cup p_2 \cup \cdots \cup p_d), (\pp^r, H))$,
from $C$ to $\pp^r$, which send $p_1 \cup p_2 \cup \cdots \cup p_d$ to $H$.
Choose coordinates $[x_1 : x_2 : \cdots : x_{r + 1}]$ on $\pp^r$
so $H$ is given by $x_{r + 1} = 0$.
Then any such map is given by $r + 1$ sections
$[s_1 : s_2 : \cdots : s_r : 1]$
of $\oo_C(p_1 + p_2 + \cdots + p_d)$,
where we write
$s_{r + 1} = 1 \in H^0(\oo_C) \subset H^0(\oo_C(p_1 + \cdots + p_d))$
for the constant section.
By taking $s_1, s_2, \ldots, s_r$, we identify
$\map_d((C, p_1 \cup p_2 \cup \cdots \cup p_d), (\pp^r, H))$
as an open subset of $H^0(\oo_C(p_1 + \cdots + p_d))^r$,
so its dimension is $r(d + 1 - g)$.
In order for $f \mapsto (f(p_i))_{i = 1}^n$ to dominate
$H^d \times (\pp^r)^{n - d}$ we must therefore also have
\[r(d + 1 - g) \geq (r - 1)d + r(n - d) \quad \Leftrightarrow \quad (r + 1)d - rg + r - rn \geq 0.\]
In addition, $f \mapsto (f(p_i))_{i = 1}^d$
must dominate $H^d$. But the fibers of this map have
a free action of the $(r + 1)$-dimensional group $\stab_H (\aut \pp^r)$;
we must therefore also have
\[r(d + 1 - g) \geq (r - 1)d + (r + 1) \quad \Leftrightarrow \quad d - rg - 1 \geq 0.\]

\begin{rem}
For $k \geq 2$, the bundles $f^* T_{\pp^r}(-k)$ almost never satisfy interpolation.
Namely, in the setting of Theorem~\ref{main}, the bundle 
$f^* T_{\pp^r}(-2)$ satisfies interpolation if and only if either $(g, r) = (0, 1)$ or
$(d, g, r) = (2, 0, 2)$;
and $f^* T_{\pp^r}(-k)$ never satisfies interpolation for $k \geq 3$.

For $r = 1$, this can be seen by observing that $f^* T_{\pp^1}(-k) \simeq \oo_C(2 - k)$
is a line bundle, so satisfies interpolation if and only if it is nonspecial
(c.f.\ Proposition~4.7 of~\cite{aly}); as $k \geq 2$,
this only happens if $k = 2$ and $C$ is rational ($g = 0$).

In general, if $f^* T_{\pp^r}(-k)$ satisfies interpolation,
$\chi(f^* T_{\pp^r}(-k)) = (r + 1)d - rg + r - krd \geq 0$.
When $r \geq 2$ and $k \geq 2$, this is only satisfied for $(d, g, r, k) = (2, 0, 2, 2)$;
conversely, one may easily check that $f^* T_{\pp^2}(-2)$ satisfies interpolation
for $f \colon \pp^1 \to \pp^2$ a general degree~$2$ map (for instance by combining Theorem~\ref{main}
with Proposition~4.12 of~\cite{aly}).
\end{rem}

The remainder of the paper will be devoted to the proof of Theorem~\ref{main}
and Theorem~\ref{main-1} using inductive degeneration.
In Section~\ref{sec:prelim}, we begin by explaining how to use degeneration
to approach the main theorems, and how to work with the condition of interpolation on reducible curves.
Then in Section~\ref{sec:rational}, we prove the main theorems for rational
curves by inductively degenerating the curve $C$ to a union
$D \cup L$, where $L$ is a $1$-secant line to $D$.
Finally in Section~\ref{sec:arbitrary}, we prove the main theorems
for arbitrary genus
by degenerating the curve $C$ to a union
$D \cup R$,
where $R$ is a rational normal curve meeting $D$ at $1 \leq s \leq r + 2$
points.

\subsection*{Acknowledgements}

The author would like to thank Joe Harris for
his guidance throughout this research, as well as
Atanas Atanasov and David Yang for helpful conversations.
The author would also like to
acknowledge the generous
support both of the Fannie and John Hertz Foundation,
and of the Department of Defense
(NDSEG fellowship).

\section{Preliminaries \label{sec:prelim}}

\begin{lm} \label{can-specialize}
The locus of $f \colon C \to \pp^r$ in $\bar{M}_g(\pp^r, d)$
for which $f^* T_{\pp^r}$ (respectively $f^* T_{\pp^r}(-1)$)
satisfies interpolation is open.
Moreover, every component of this locus dominates $\bar{M}_g$.

In particular, to prove Theorem~\ref{main} (respectively Theorem~\ref{main-1})
for curves of degree $d$ and genus $g$,
it suffices to exhibit
one possibly-singular nondegenerate $f \colon C \to \pp^r$ of degree $d$ and genus $g$,
for which $f^* T_{\pp^r}$ (respectively $f^* T_{\pp^r}(-1)$) satisfies interpolation.
\end{lm}
\begin{proof}
Since the vanishing of cohomology groups is an open condition,
it follows that interpolation is an open condition as well.
(For a more careful proof, see Theorem~5.8 of \cite{nasko}.)

If $f^* T_{\pp^r}$ (respectively $f^* T_{\pp^r}(-1)$) satisfies interpolation,
then in particular $H^1(f^* T_{\pp^r}) = 0$ (respectively
$H^1(f^* T_{\pp^r}(-1)) = 0$).
 Since 
$H^1(f^* T_{\pp^r}(-1)) = 0$ implies $H^1(f^* T_{\pp^r}) = 0$,
we know either way that
$H^1(f^* T_{\pp^r}) = 0$.
This completes the proof
as the obstruction to smoothness
of $\bar{M}_g(\pp^r, d) \to \bar{M}_g$
lies in $H^1(f^* T_{\pp^r})$.
\end{proof}

In order to work with reducible curves, it will be helpful to
introduce the following somewhat more general variant on interpolation:

\begin{defi} \label{T:interpolation}
Let $\mathcal{E}$ be a rank $n$ vector bundle over a curve $C$.
We say that a subspace of sections
$V \subseteq H^0(\mathcal{E})$ satisfies interpolation if
$\mathcal{E}$ is nonspecial and, for every $d \geq 0$,
there exists an effective Cartier divisor $D$ of degree $d$ such that
  \[
  \dim \left( V \cap H^0\left( \mathcal{E}(-D) \right)\right) = \max\{0, \dim V - d n \}.
  \]
\end{defi}

We now state for the reader
the elementary properties of interpolation that
we shall need from \cite{aly} and \cite{nasko}.

\begin{lm} \label{vb-sections}
A vector bundle
$\mathcal{E}$ satisfies interpolation if and only if its full space of sections $V = H^0(\mathcal{E}) \subseteq H^0(\mathcal{E})$
satisfies interpolation. 
\end{lm}
\begin{proof}[Proof sketch (see Proposition~4.5, c.f.\ also Definition~4.1, of~\cite{aly} for a complete proof).]
The result follows by examining the long exact sequence
\[0 \to H^0(\mathcal{E}(-D)) \to H^0(\mathcal{E}) \to H^0(\mathcal{E}|_D) \to H^1(\mathcal{E}(-D)) \to H^1(\mathcal{E}) = 0. \qedhere\]
\end{proof}

\begin{lm} \label{trivglue}
Let $\mathcal{E}$ be a vector bundle on a reducible curve
$X \cup Y$, and $D$ be an effective divisor on $X$ disjoint from $X \cap Y$.
Assume that
\[H^0(\mathcal{E}|_X(-D - X \cap Y)) = 0.\]
Let
\begin{gather*}
\ev_X \colon H^0(\mathcal{E}|_X) \longrightarrow H^0(\mathcal{E}|_{X \cap Y}) \\
\ev_Y \colon H^0(\mathcal{E}|_Y) \longrightarrow H^0(\mathcal{E}|_{X \cap Y})
\end{gather*}
denote the natural evaluation morphisms.
Then $\mathcal{E}$ satisfies interpolation provided that
\[V = \ev_Y^{-1}(\ev_X(H^0(\mathcal{E}|_X(-D)))) \subseteq H^0(\mathcal{E}|_Y)\]
satisfies interpolation and has dimension
\[\chi(\mathcal{E}|_Y) + \chi(\mathcal{E}|_X(-D - X \cap Y)).\]
\end{lm}
\begin{proof}[Proof sketch (see Proposition~8.1 of \cite{aly} for a complete proof).]
As $H^0(\mathcal{E}|_X(-D - X \cap Y)) = 0$, restriction to $Y$
gives an isomorphism $H^0(\mathcal{E}(-D)) \simeq V$.
Also, the final dimension statement implies
$H^1(\mathcal{E}(-D)) = 0$.
Therefore $\mathcal{E}(-D)$, and hence $\mathcal{E}$, satisfies interpolation.
\end{proof}

\begin{lm} \label{genmod}
Let $\mathcal{E}$ be a vector bundle on an irreducible curve $C$, and
$p \in C_{\text{sm}}$ be a general point. If $\mathcal{E}$ satisfies
interpolation, and $\Lambda \subseteq \mathcal{E}|_p$
is a general subspace of any dimension, then
\[\{\sigma \in H^0(\mathcal{E}) : \sigma|_p \in \Lambda\} \subseteq H^0(\mathcal{E})\]
satisfies interpolation and has dimension $\max\{0, \chi(\mathcal{E}) - \codim \Lambda\}$.
\end{lm}
\begin{proof}[Proof sketch (see Theorem~8.1, c.f.\ also Section~3, of~\cite{nasko} for a complete proof).]
Let $n = \rk \mathcal{E}$, and $D$ be a general effective divisor of any degree $d \geq 0$.
Since $\mathcal{E}$ satisfies interpolation,
\begin{align*}
\dim H^0(\mathcal{E}(-D)) &= \max\{0, \chi(\mathcal{E}) - dn\} \\
\dim H^0(\mathcal{E}(-D - p)) &= \max\{0, \chi(\mathcal{E}) - dn - n\}.
\end{align*}
These inequalities imply the restriction map $H^0(\mathcal{E}(-D)) \to \mathcal{E}|_p$
is either injective or surjective, which (since $\Lambda$ is general)
in turn implies the composition
$H^0(\mathcal{E}(-D)) \to \mathcal{E}|_p \to \mathcal{E}|_p / \Lambda$
is either injective or surjective.
Together with the first of the above equalities, this implies
\[\dim \{\sigma \in H^0(\mathcal{E}(-D)) : \sigma|_p \in \Lambda\} = \max\{0, \chi(\mathcal{E}) - \codim \Lambda - dn\}. \qedhere\]
\end{proof}

\section{Rational Curves \label{sec:rational}}

In this section, we prove Theorem~\ref{main} in the case $g = 0$.

\begin{prop} \label{prop:line} Let $L \subseteq \pp^r$ be a line. Then
$T_{\pp^r}|_L \simeq \oo_L(2) \oplus \oo_L(1)^{r - 1}$,
where the $\oo_L(2)$ summand comes from the inclusion $T_L \hookrightarrow T_{\pp^r}|_L$;
in particular, $T_{\pp^r}|_L$ and $T_{\pp^r}|_L(-1)$ satisfy interpolation.
\end{prop}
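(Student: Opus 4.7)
The plan is to first establish the claimed splitting of $T_{\pp^r}|_L$, and then use the splitting to reduce interpolation for both $T_{\pp^r}|_L$ and $T_{\pp^r}|_L(-1)$ to a direct cohomology computation on $\pp^1$.

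For the splitting, I would start from the normal bundle sequence
\[0 \to T_L \to T_{\pp^r}|_L \to N_{L/\pp^r} \to 0.\]
We have $T_L \simeq \oo_L(2)$ since $L \simeq \pp^1$, and $N_{L/\pp^r} \simeq \oo_L(1)^{r - 1}$ by the standard computation of the normal bundle of a linear subspace (which can itself be obtained by restricting the Euler sequence to $L$ and comparing with the tangent sequence of $L$). The extension class lies in
\[\ext^1(\oo_L(1)^{r - 1}, \oo_L(2)) = H^1(L, \oo_L(1))^{r - 1} = 0,\]
so the sequence splits and we obtain $T_{\pp^r}|_L \simeq \oo_L(2) \oplus \oo_L(1)^{r - 1}$, with the $\oo_L(2)$ summand identified with $T_L$.

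Given this splitting, interpolation is a direct verification. For any effective divisor $D$ of degree $d$ on $L$,
\[T_{\pp^r}|_L(-D) \simeq \oo_L(2 - d) \oplus \oo_L(1 - d)^{r - 1},\]
whose $H^0$ vanishes iff $d \geq 3$ and whose $H^1$ vanishes iff $d \leq 2$; the two ranges cover all $d \geq 0$, so interpolation holds. The computation for $T_{\pp^r}|_L(-1) \simeq \oo_L(1) \oplus \oo_L^{r - 1}$ is identical, with the cutoffs at $d = 2$ and $d = 1$ instead.

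There is essentially no obstacle here beyond bookkeeping: everything on $\pp^1$ splits by Grothendieck, the only extension group that could obstruct the splitting vanishes for degree reasons, and interpolation on $\pp^1$ for a direct sum of nonnegative line bundles reduces to comparing two elementary numerical inequalities.
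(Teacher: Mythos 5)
Your proposal is correct and follows the same route as the paper: both deduce the splitting from the normal bundle sequence $0 \to T_L \to T_{\pp^r}|_L \to N_{L/\pp^r} \to 0$ with $N_{L/\pp^r} \simeq \oo_L(1)^{r-1}$, and then verify interpolation by direct inspection of the line-bundle summands. You merely make explicit two points the paper leaves implicit, namely the vanishing of $\ext^1(\oo_L(1)^{r-1}, \oo_L(2))$ and the degree-by-degree cohomology check, and both are carried out correctly.
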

\begin{proof}
The first assertion follows from the exact sequence
\[0 \to T_L \simeq \oo_L(2) \to T_{\pp^r}|_L \to N_{L/\pp^r} \simeq \oo_L(1)^{r - 1} \to 0.\]
(We have $N_{L/\pp^r} \simeq \oo_L(1)^{r - 1}$ since $L$
is the complete intersection of $r - 1$ hyperplanes.)

The second assertion follows from the first
by inspection (or alternatively using Proposition~3.11 of~\cite{nasko}).
\end{proof}

\begin{prop} \label{prop:rat}
Let $f \colon \pp^1 \to \pp^r$ be a general degree $d$ map
(allowed to be degenerate if $d < r$).
Then $f^*T_{\pp^r}(-1)$ --- and consequently $f^* T_{\pp^r}$ --- satisfies interpolation.
\end{prop}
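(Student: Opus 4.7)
The plan is to induct on $d$, treating both bundles in parallel. The base case $d = 1$ is Proposition~\ref{prop:line}. For the inductive step ($d \geq 2$), I would invoke Lemma~\ref{can-specialize} to reduce to exhibiting a single example, and I specialize the source to the nodal curve $C = D \cup L$ of arithmetic genus $0$, where $D$ is a general rational curve of degree $d - 1$ (degenerate when $d - 1 < r$) and $L$ is a general line meeting $D$ transversally at a single general point $p$.

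For either $\mathcal{E} = f^*T_{\pp^r}(-1)$ or $\mathcal{E} = f^*T_{\pp^r}$, I would apply the gluing lemma (Lemma~\ref{trivglue}) with $X = L$, $Y = D$, and $D_0$ a general effective divisor on $L$ disjoint from $p$ --- of degree $1$ in the twisted case and of degree $2$ in the untwisted case. Using Proposition~\ref{prop:line}'s splitting of $T_{\pp^r}|_L$ (and hence of $\mathcal{E}|_L$), a direct computation confirms that $H^0(\mathcal{E}|_L(-D_0 - p)) = 0$ and that $H^0(\mathcal{E}|_L(-D_0))$ is one-dimensional, spanned by the section surviving from the summand $T_L \hookrightarrow T_{\pp^r}|_L$. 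Consequently the evaluation image $W := \ev_L(H^0(\mathcal{E}|_L(-D_0))) \subseteq \mathcal{E}|_p$ is exactly the one-dimensional subspace $T_L|_p \subseteq T_{\pp^r}|_{f(p)}$.

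The remaining work is to verify the hypotheses of Lemma~\ref{trivglue} on $V := \{\sigma \in H^0(\mathcal{E}|_D) : \sigma|_p \in W\}$. The crucial observation is that as $L$ varies over lines through $f(p)$, the subspace $T_L|_p$ ranges over an \emph{arbitrary} one-dimensional subspace of $\mathcal{E}|_p$, so $W$ is a general one-dimensional subspace. Applying Lemma~\ref{genmod} to $\mathcal{E}|_D$ --- which satisfies interpolation by the inductive hypothesis --- to the general point $p$ and to the general subspace $W$ yields that $V$ satisfies interpolation with dimension $\chi(\mathcal{E}|_D) - (r - 1)$; a short calculation confirms this equals the required $\chi(\mathcal{E}|_D) + \chi(\mathcal{E}|_L(-D_0-p))$ in both cases. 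Lemma~\ref{trivglue} then completes the inductive step.

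The main subtlety is deciding which subspace of $\mathcal{E}|_p$ to cut down to in the gluing: not every linear constraint on sections at $p$ arises from a section on the other component with vanishing elsewhere, and the one that does arise must moreover be general for Lemma~\ref{genmod} to apply. This is resolved cleanly by Proposition~\ref{prop:line}: the splitting $T_{\pp^r}|_L \simeq \oo_L(2) \oplus \oo_L(1)^{r-1}$ picks out the tangent direction $T_L|_p$ as the natural constraint, and this becomes automatically general when $L$ is itself taken general among lines through $f(p)$.
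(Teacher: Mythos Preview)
Your argument is correct and matches the paper's proof essentially line for line: induction on $d$ with base case Proposition~\ref{prop:line}, degeneration to $D \cup_p L$, application of Lemma~\ref{trivglue} with a divisor on $L$, identification of the resulting constraint at $p$ as the tangent direction $T_L|_p$ (general because $L$ is), and conclusion via Lemma~\ref{genmod}. The only difference is cosmetic: the paper runs the induction solely for $f^*T_{\pp^r}(-1)$ and then deduces the untwisted case at the end from the general fact that interpolation for $\mathcal{E}$ implies interpolation for $\mathcal{E}(p)$ (Proposition~4.11 of~\cite{aly}), whereas you treat the untwisted bundle directly by taking $D_0$ of degree~$2$ on $L$ --- which works just as well.
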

\begin{proof}
To show $f^* T_{\pp^r}(-1)$ satisfies interpolation,
we argue by induction on the degree $d$ of $f$;
the base case $d = 1$ is given by Proposition~\ref{prop:line}.
For the inductive step $d \geq 2$,
we degenerate $f \colon \pp^1 \to \pp^r$
to a map $g \colon D \cup_p L \to \pp^r$ from a two-component reducible
rational curve; write $g_D = g|_D$ and $g_L = g|_L$.
Assume that $\deg g_D = d - 1$ and
$\deg g_L = 1$.
By Lemma~\ref{can-specialize},
it suffices to show $g^* T_{\pp^r}(-1)$ satisfies interpolation.
Write
\begin{gather*}
\ev_D \colon H^0(g_D^* T_{\pp^r}(-1)) \to H^0(T_{\pp^r}(-1)|_p) \\
\ev_L \colon H^0(g_L^* T_{\pp^r}(-1)) \to H^0(T_{\pp^r}(-1)|_p)
\end{gather*}
for the natural evaluation morphisms. Pick a point $x \in L \smallsetminus p$.
Then by Lemma~\ref{trivglue}, it suffices to show
\[V = \ev_D^{-1}(\ev_L(H^0(g_L^* T_{\pp^r}(-1)(-x)))) \subseteq H^0(g_D^* T_{\pp^r}(-1))\]
satisfies interpolation and has dimension
\[\chi(g_D^* T_{\pp^r}(-1)) + \chi(g_L^* T_{\pp^r}(-1)(-x-p)) = \chi(g_D^* T_{\pp^r}(-1)) - (r - 1).\]
Using the description of $g_L^* T_{\pp^r}$ from Proposition~\ref{prop:line},
\[V = \{\sigma \in H^0(g_D^* T_{\pp^r}(-1)) : \sigma|_p \in T_L(-1)|_p\}.\]
Since $T_L(-1)|_p \subseteq T_{\pp^r}(-1)|_p$ is a general subspace of codimension $r - 1$,
Lemma~\ref{genmod} implies $V$ satisfies interpolation
and has dimension $\max\{0, \chi(g_D^* T_{\pp^r}(-1)) - (r - 1)\}$.
It thus suffices to note that
\[\chi(g_D^* T_{\pp^r}(-1)) - (r - 1) = d + 1 \geq 0.\]

By inspection (or alternatively using Proposition~4.11 of~\cite{aly}),
interpolation for $f^*T_{\pp^r}(-1)$ implies interpolation for $f^* T_{\pp^r}$.
\end{proof}

\section{Curves of Higher Genus \label{sec:arbitrary}}

\begin{lm} \label{lm:higher} 
Let $\mathcal{E}$ be a vector bundle of rank~$n$ on a reducible nodal curve
$X \cup Y$, such that $X \cap Y$ is a general collection of $k$ points
on $X$ (relative to $\mathcal{E}|_X$).
Suppose that $\mathcal{E}|_X$ and $\mathcal{E}|_Y$ satisfy interpolation.
If $\chi(\mathcal{E}|_X) \equiv 0$ mod $n$ and $\chi(\mathcal{E}|_X) \geq nk$,
then $\mathcal{E}$ satisfies interpolation.
\end{lm}
\begin{proof}
Let $D$ be a general divisor on $X$ (in particular disjoint from $X \cap Y$)
of degree $\frac{\chi(\mathcal{E}|_X) - nk}{n}$. Write
\begin{gather*}
\ev_X \colon H^0(\mathcal{E}|_X) \longrightarrow H^0(\mathcal{E}|_{X \cap Y}) \\
\ev_Y \colon H^0(\mathcal{E}|_Y) \longrightarrow H^0(\mathcal{E}|_{X \cap Y})
\end{gather*}
for the natural evaluation morphisms, and let
\[V = \ev_Y^{-1}(\ev_X(H^0(\mathcal{E}|_X(-D)))) \subseteq H^0(\mathcal{E}|_Y).\]

Since $\mathcal{E}|_X$ satisfies interpolation
and $\chi(\mathcal{E}|_X(-D)) = nk$, we conclude
$\ev_X$ is an isomorphism when restricted to $H^0(\mathcal{E}|_X(-D))$.
In particular, $H^0(\mathcal{E}|_X(-D - X \cap Y)) = 0$, and
$V = H^0(\mathcal{E}|_Y)$ is the full space of sections.
Because $\mathcal{E}|_Y$ satisfies interpolation,
we conclude $V$ satisfies interpolation and has dimension
$\chi(\mathcal{E}|_Y) = \chi(\mathcal{E}|_Y) + \chi(\mathcal{E}|_X(-D - X \cap Y))$.
This implies via Lemma~\ref{trivglue} that $\mathcal{E}$ satisfies interpolation.
\end{proof}

\begin{proof}[Proof of Theorem~\ref{main}.]
We argue by induction on $g$; the base case $g = 0$
is given by Proposition~\ref{prop:rat}.
For the inductive step $g \geq 1$,
we let
\[(s, g', d') = \begin{cases}
(r + 2, g - r - 1, d - r) & \text{if $g \geq r + 1$;} \\
(g + 1, 0, d - r) & \text{otherwise.}
\end{cases}\]
By construction, $g' \geq 0$ and $1 \leq s \leq r + 2$.
Moreover, since $\rho(d, g, r) \geq 0$, we have either
$\rho(d', g', r) \geq 0$, or $g' = 0$ and $d' \geq 1$;
and in the second case $d' \geq s - 1$.

We now degenerate $f \colon C \to \pp^r$
to a map $g \colon D \cup_\Gamma \pp^1 \to \pp^r$ from a two-component reducible
curve; write $g_D = g|_D$ and $g_{\pp^1} = g|_{\pp^1}$.
By the above, we may take $D$ to be a general curve of genus $g'$,
and $\Gamma$ a collection of $s$ points general on both $D$ and $\pp^1$;
we also take $g_D$ and $g_{\pp^1}$ to be general maps
of degrees $d'$ and $r$ respectively (composing with an automorphism
of $\pp^r$ so that $g_D(\Gamma) = g_{\pp^1}(\Gamma)$ --- which 
exists since $\aut \pp^r$ acts $(r + 2)$-transitively on
points in linear general position;
note that $s \leq r + 2$, and $g_{\pp^1}$ is nondegenerate,
while $g_D$ spans at least a $\pp^{\min(r, s - 1)}$).
By Lemma~\ref{can-specialize}, it suffices to show $g^* T_{\pp^r}$
satisfies interpolation.

By induction (and direct application of Proposition~\ref{prop:rat}
in the case $g' = 0$ and $d' \geq 1$),
we know $g_D^* T_{\pp^r}$ and $g_{\pp^1}^* T_{\pp^r}$
satisfy interpolation.
Moreover, $\chi(g_{\pp^1}^* T_{\pp^r}) = r(r + 2)$
is a multiple of $r$ which is at least $rs$.
Lemma~\ref{lm:higher} thus yields the desired conclusion.
\end{proof}

\begin{proof}[Proof of Theorem~\ref{main-1}.]
We argue by induction on $g$; the base case $g = 0$
is given by Proposition~\ref{prop:rat}.
For the inductive step $g \geq 1$,
we write $g' = g - 1 \geq 0$ and $d' = d - r$.
Since $d - rg - 1 \geq 0$, we have
$d' - rg' - 1 \geq 0$. This in turn implies
either $\rho(d', g', r) \geq 0$, or 
$g' = 0$ and $d' \geq 1$.

We now degenerate $f \colon C \to \pp^r$
to a map $g \colon D \cup_\Gamma \pp^1 \to \pp^r$ from a two-component reducible
curve; write $g_D = g|_D$ and $g_{\pp^1} = g|_{\pp^1}$.
By the above, we may take $D$ to be a general curve of genus $g'$,
and $\Gamma$ a collection of $2$ points general on both $D$ and $\pp^1$;
we also take $g_D$ and $g_{\pp^1}$ to be general maps
of degrees $d'$ and $r$ respectively (composing with an automorphism
of $\pp^r$ so that $g_D(\Gamma) = g_{\pp^1}(\Gamma)$ --- which exists
since $\aut \pp^r$ acts $2$-transitively).
By Lemma~\ref{can-specialize}, it suffices to show $g^* T_{\pp^r}(-1)$
satisfies interpolation.

By induction (and direct application of Proposition~\ref{prop:rat}
in the case $g' = 0$ and $d' \geq 1$),
we know $g_D^* T_{\pp^r}(-1)$ and $g_{\pp^1}^* T_{\pp^r}(-1)$
satisfy interpolation.
Moreover, $\chi(g_{\pp^1}^* T_{\pp^r}(-1)) = 2r$.
Lemma~\ref{lm:higher} thus yields the desired conclusion.
\end{proof}

\bibliographystyle{amsplain.bst}
\bibliography{mrcbib}

\end{document}